\newtheorem{lemma}{Lemma}
\newtheorem{definition}{Definition}
\newtheorem{example}{Example}
\newtheorem*{example*}{Example}
\newtheorem{proposition}{Proposition}
\newtheorem{question}{Question}
\newtheoremstyle{named}{}{}{\itshape}{}{\bfseries}{.}{.5em}{\thmnote{#3}}
\theoremstyle{named}
\numberwithin{theorem}{section}
\numberwithin{lemma}{section}
\numberwithin{definition}{section}
\numberwithin{corollary}{section}
\numberwithin{definition}{section}
\newcommand{\N}{\mathbb{N}}
\newcommand{\R}{\mathbb{R}}
\newcommand{\Z}{\mathbb{Z}}
\newcommand{\Q}{\mathbb{Q}}
\newcommand{\x}{{\bf x}}
\newcommand{\ba}{{\bf a}}
\newcommand{\bu}{{\bf u}}
\newcommand{\Trop}{\textup{Trop}}
\newcommand{\ini}[1]{in_{#1}}
\newcommand{\K}{\Bbbk }
\title{Well-poised hypersurfaces}
\author[Cecil]{Joseph Cecil}
\address{Information Sciences Institute, Marina Del Rey, CA, USA 90292}
\email{jcecil@isi.edu}
\author[Dutta]{Neelav Dutta}
\address{University of Virginia, Charlottesville, VA, USA 22903-1738}
\email{nsd8uc@virginia.edu}
\author[Manon]{Christopher Manon}
\address{University of Kentucky, Lexington, USA 40506}
\email{Christopher.Manon@uky.edu}
\author[Riley]{Benjamin Riley}
\address{University of Michigan, Ann Arbor, MI, USA 48109-1382}
\email{benriley@umich.edu}
\author[Vichitbandha]{Angela Vichitbandha}
\address{North Carolina State University, Raleigh, NC, USA 27695}
\email{avichit@ncsu.edu}
\begin{document}

\begin{abstract}
An ideal $I$ is said to be "well-poised" if all of the initial ideals obtained from points in the tropical variety $\Trop(I)$ are prime.  This condition was first defined by Nathan Ilten and the third author.  We classify all well-poised hypersurfaces over an algebraically closed field.  We also compute the tropical varieties and associated Newton-Okounkov bodies of these hypersurfaces.
\end{abstract}

\maketitle

\tableofcontents

\section{Introduction}

Let $\K[\x] = \K[x_1, \ldots, x_n]$ be the polynomial ring in $n$ variables over an algebraically closed field $\K$ and let $I \subset \K[\x]$ be a monomial-free ideal. The \emph{tropical variety} $\Trop(I)$ associated to $I$ is the set of vectors $\boldsymbol \omega \in \R^n$ whose associated ideal of \emph{initial forms} $\ini{\boldsymbol \omega}(I)$  (see \cref{eq-initialform}) also contains no monomials, \cite{Maclagan-Sturmfels}. The zero locus $V(\ini{\boldsymbol \omega}(I))$ of such an initial ideal is a flat degeneration of the affine variety $V(I) \subseteq \mathbb{A}^n(\K)$.  When $\ini{\boldsymbol \omega}(I)$ is a prime binomial ideal, the variety $V(\ini{\boldsymbol \omega}(I))$ is an affine (possibly non-normal) toric variety, and we say that $\boldsymbol \omega \in \Trop(I)$ defines a \emph{toric degeneration} of $V(I)$. In this case, $\boldsymbol \omega$ is said to be a \emph{prime point} of $\Trop(I)$, and the open face $\sigma$ of the Gr\"obner fan of $I$ containing $\boldsymbol \omega$ in its relative interior (likewise contained in $\Trop(I)$) is a \emph{prime cone}. In the following, we give $\Trop(I)$ the fan structure inherited from the Gr\"obner fan of $I$, and we let $\ini{\sigma}(I)$ denote the initial ideal associated to a relatively open face $\sigma \in \Trop(I)$.

Due to their close connection with polyhedral geometry, prime binomial ideals and their associated toric varieties are often easier to handle than general prime ideals. For example, the Gorenstein property, the Cohen-Macaulay property, the Koszul property, normality of the corresponding variety, and bounds on the Betti numbers can be more easily checked for prime binomial ideals. Moreover, if these properties hold for a flat degeneration of a variety, they can be established for the variety itself. In this way toric degeneration can be a useful tool for studying both the geometry of the original variety $V(I)$ and its coordinate algebra $\K[\x]/I$.   In particular, it can be shown that there is a \emph{Newton-Okounkov body} (\cite{KK}, \cite{LM}, \cite{Ok}) associated to each prime cone of maximal dimension in $\Trop(I)$ (\cite{Kaveh-Manon-NOK}) from which many invariants of $V(I)$ can be extracted.  Recently, Escobar and Harada \cite{Escobar-Harada} have shown that maximal prime cones in $\Trop(I)$ which share a facet give rise to a \emph{wall-crossing} phenomenon between their associated Newton-Okounkov bodies. For this reason it is of interest to know when $\ini{\boldsymbol \omega}(I)$ is a prime ideal for every $\boldsymbol \omega \in \Trop(I)$. Following work of the third author and Ilten in \cite{Ilten-Manon} such an ideal is said to be \emph{well-poised}. In this paper, we classify all well-poised principal ideals (Theorem \ref{maintheorem}). A description of Newton-Okounkov bodies for well-poised hypersurfaces appears in Section \ref{Angela}.  

 We write $f = \sum c_i \x^{{\bf a}_i}$ to mean a polynomial in $\K[\x]$ with monomial terms $c_i\x^{{\bf a}_i}$, for $c_i \in \K$ and $\x^{{\bf a}_i} = x_1^{a_{i,1}}\cdots x_n^{a_{i,n}}$. The initial form:
 
 \begin{equation}\label{eq-initialform}
 \ini{\boldsymbol \omega}(f) = \sum_{{\bf a}_i \in M} c_i\x^{{\bf a}_i}
 \end{equation}

 \noindent
 for a real vector $\boldsymbol \omega = (\omega_1, \ldots, \omega_n) \in \R^n$ is the sum of those monomial terms $c_i\x^{{\bf a}_i}$, where ${\bf a}_i$ belongs to the set $M$ of those exponents whose inner product with $\boldsymbol \omega$ is maximal (see \cite{Sturmfels96}).  Likewise, the initial ideal $\ini{\boldsymbol \omega}(I) \subset \K[\x]$ is the ideal generated by the initial forms $\{\ini{\boldsymbol \omega}(f) \mid f \in I\}$.  If $I$ is principal, say generated by $f \in \K[\x]$, then $\ini{\boldsymbol \omega}(I) = \langle \ini{\boldsymbol \omega}(f) \rangle$; with this in mind we get the following definition. 
 
\begin{definition}\label{wellpoiseddefinition}
A polynomial $f$  is said to be {\bf well-poised} if every initial form which is not a monomial is irreducible.
\end{definition}

\noindent
We introduce the following conventions. We say that that $\gcd({\bf a}_i, {\bf a}_j)$ is the gcd of all entries of the two exponent vectors. We recall the \emph{support}  of a monomial term $\x^{{\bf a_i}}$, denoted $supp(\x^{{\bf a_i}})$, is the set $\{j : {\bf a}_{i,j} \neq 0\}$. Several of our results will involve the condition that $supp(x^{{\bf a}_i})\cap supp(x^{{\bf a}_j})=\varnothing$ for two monomials in a polynomial $f$, and so we give the following definition:
\begin{definition}\label{def-disjointly}
We say a polynomial $f = \sum_{i =1}^n c_i\x^{{\bf a}_i}$ is disjointly supported if $supp(x^{{\bf a}_i})\cap supp(x^{{\bf a}_j})=\varnothing$ for all $c_i, c_j \neq 0$.
\end{definition}

The following is our main result.

\begin{restatable}{theorem}{maintheorem}
\label{maintheorem}
A polynomial $f = \sum_{i \in \mathbb{N}}c_i {\bf x}^{{\bf a}_i}$ is well-poised if and only if $f$ is disjointly supported and $\gcd({\bf a}_i, {\bf a}_j) = 1$ for any pair $i,j\in \N$.
\end{restatable}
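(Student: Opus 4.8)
The plan is to prove the two directions separately, using throughout that the initial forms of a principal polynomial are exactly its ``face restrictions'': $\ini{\boldsymbol\omega}(f)=f_F:=\sum_{{\bf a}_i\in F}c_i\x^{{\bf a}_i}$, where $F$ is the face of the Newton polytope $P:=\mathrm{Newt}(f)$ selected by $\boldsymbol\omega$. Thus $f$ is well-poised exactly when $f_F$ is irreducible for every face $F$ of $P$ carrying at least two terms. We may assume $f$ has at least two terms, the monomial case being vacuous.

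\textbf{The ``if'' direction.} Suppose $f$ is disjointly supported with $\gcd({\bf a}_i,{\bf a}_j)=1$ for $i\ne j$, and let $g=\ini{\boldsymbol\omega}(f)$ be a non-monomial initial form. Its terms inherit disjoint supports and pairwise coprimality, so $\mathrm{Newt}(g)$ is a lattice simplex $\Delta$ and no variable divides $g$. If $g=g_1g_2$ nontrivially then neither factor is a monomial (a monomial non-unit factor would force a variable to divide $g$), so $\Delta=\mathrm{Newt}(g_1)+\mathrm{Newt}(g_2)$ is a Minkowski sum of two positive-dimensional lattice polytopes. I would invoke the classical fact that every Minkowski summand of a simplex is homothetic to it, giving $\mathrm{Newt}(g_1)=t\Delta+{\bf u}$ with $t\in(0,1)$; integrality of the vertices $t{\bf a}_i+{\bf u}$ forces $t({\bf a}_i-{\bf a}_j)\in\Z^n$ for all $i,j$, and disjointness of supports then shows that the denominator of $t$ divides each $d_i:=\gcd({\bf a}_i)$. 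Since $\gcd({\bf a}_i,{\bf a}_j)=\gcd(d_i,d_j)$, the $d_i$ are pairwise coprime, so that denominator equals $1$, contradicting $t\in(0,1)$. Hence $g$ is irreducible.

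\textbf{The ``only if'' direction} (contrapositive, in two structural steps). \emph{Step 1: every edge of $P$ is a coprime, disjointly supported binomial.} The initial form $f_e$ on an edge $e=[{\bf a}_i,{\bf a}_j]$ is supported on a line segment, so as a Laurent polynomial it equals $\x^{{\bf a}_i}\,\tilde h(\x^{\bf v})$ for the primitive edge direction ${\bf v}$ and some $\tilde h\in\K[T]$ with nonzero constant term. Factoring $\tilde h$ over $\K$, using that $\x^{\bf v}-\rho$ is irreducible for $\rho\ne0$ and ${\bf v}$ primitive, and invoking the standard equivalence of irreducibility in $\K[\x]$ and in $\K[\x^{\pm}]$ for polynomials divisible by no variable, well-poisedness forces $f_e=c_i\x^{{\bf a}_i}+c_j\x^{{\bf a}_j}$ with ${\bf a}_i-{\bf a}_j$ primitive (hence $\gcd({\bf a}_i,{\bf a}_j)=1$) and with $supp({\bf a}_i)\cap supp({\bf a}_j)=\varnothing$ (otherwise a variable divides $f_e$ and it is reducible). \emph{Step 2: $P$ is a simplex whose only lattice exponents are its vertices.} Suppose two vertices of $P$ shared a variable $x_l$. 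Then, by Step 1, the set $A=\{v\text{ a vertex of }P:\ v_l\ge1\}$ is an independent set of the $1$-skeleton of $P$ with $|A|\ge2$; hence every $v\in A$ is strictly larger than each of its neighbours under the $l$-th coordinate function $z_l$, so $v$ is a global maximizer of $z_l$ on $P$ (for a linear functional on a polytope, a vertex not beaten by any neighbouring vertex is optimal). Then the $z_l$-maximal face of $P$ contains all of $A$, hence an edge joining two elements of $A$ --- contradicting independence. So no two vertices of $P$ share a variable; the vertices are therefore affinely independent, $P$ is a simplex in which every pair of vertices is an edge, and by Step 1 the vertices are pairwise disjointly supported with pairwise coprime gcds. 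Finally, a term of $f$ sitting at a relative-interior lattice point $\sum_{i\in S}\lambda_i v_i$ of a face of $P$ ($\lambda_i>0$, $\sum\lambda_i=1$, $|S|\ge2$) would, by disjointness of supports, force $\lambda_i d_i\in\Z$ for each $i\in S$; reducing the identity $\sum_{i\in S}(\lambda_i d_i)\prod_{j\in S\setminus\{i\}}d_j=\prod_{i\in S}d_i$ modulo some $d_{i_0}$ and using pairwise coprimality gives $d_{i_0}\mid\lambda_{i_0}d_{i_0}$, i.e.\ $\lambda_{i_0}\ge1$, which is impossible. Thus the terms of $f$ are exactly the vertices of $P$, and we are done.

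\textbf{Main obstacle.} I expect Step 2 of the ``only if'' direction to be the crux: passing from ``all edges of $P$ are disjointly supported'' to ``$P$ is a simplex''. Trying to read off a reducible initial form directly from a shared variable fails, because the two offending terms need not span an edge of $P$; the trick is to feed the shared variable into the coordinate functional $z_l$ and exploit that independence of $A$ in the $1$-skeleton makes each vertex of $A$ a local --- hence global --- optimum of $z_l$. The remaining ingredients are standard: the segment-support factorization and the passage between $\K[\x]$ and $\K[\x^{\pm}]$ are routine, Minkowski-indecomposability of simplices is classical, and ruling out interior exponents is a short number-theoretic computation.
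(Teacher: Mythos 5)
Your proof is correct and follows essentially the same route as the paper: faces of the Newton polytope correspond to the initial forms, well-poisedness forces disjointly supported vertices via the coordinate-maximizing-vertex argument, hence a simplex with coprime, disjointly supported edges and no non-vertex lattice points, and conversely Minkowski-indecomposability of the resulting simplices gives irreducibility of every non-monomial initial form. If anything, your write-up is more careful in two places where the paper is loose: you invoke the correct statement that Minkowski summands of a simplex are homothets and then use coprimality to kill the fractional dilation (the paper's bare claim that any decomposition of a simplex is ``point plus simplex'' needs exactly this supplement), and your observation that a vertex beating all its neighbors for a linear functional is a global optimizer cleanly replaces the vaguer convexity argument in the proof of Lemma~\ref{WellPDis}.
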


We also give a complete description of two combinatorial invariants of a well-poised hypersurface. We show that the \emph{Newton polytope}, denoted $N(f)$, of any well-poised hypersurface contains no interior lattice points (\cref{interiorlatticepoints}), and we give a complete description of the tropical variety $\Trop(f)$ (\cref{Angela}).

\begin{restatable}{theorem}{interiorlatticepoints}
\label{interiorlatticepoints}
Let $f$ be well-poised. Then $N(f)$ is a simplex. Further, $N(f)\cap \Z^n$ is precisely the vertex set of $N(f)$. \end{restatable}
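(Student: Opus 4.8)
The plan is to combine \cref{maintheorem} with a direct analysis of the convex hull. By \cref{maintheorem}, a well-poised $f = \sum_{i=1}^{k} c_i {\bf x}^{{\bf a}_i}$ has exponent vectors ${\bf a}_1,\dots,{\bf a}_k \in \N^n$ with pairwise disjoint supports and with $\gcd({\bf a}_i,{\bf a}_j)=1$ for all $i\neq j$, and $N(f)=\mathrm{conv}({\bf a}_1,\dots,{\bf a}_k)$. If $k=1$ then $f$ is a monomial and both claims are trivial, so assume $k\geq 2$; note also that at most one ${\bf a}_i$ can be the zero vector. I would first show that $N(f)$ is a $(k-1)$-simplex, i.e.\ that ${\bf a}_1,\dots,{\bf a}_k$ are affinely independent, which uses only disjoint support: given an affine relation $\sum_i \lambda_i {\bf a}_i = 0$ with $\sum_i \lambda_i = 0$, for each $i$ with ${\bf a}_i\neq 0$ pick a coordinate $l\in supp({\bf a}_i)$; by disjointness $l$ lies in no other support, so the $l$-th coordinate of the relation reads $\lambda_i a_{i,l}=0$, forcing $\lambda_i=0$. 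The equation $\sum_i\lambda_i=0$ then kills the one remaining coefficient (if a zero exponent vector is present), so all $\lambda_i=0$ and $N(f)$ is the simplex on the vertices ${\bf a}_i$.

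For the lattice-point statement, let ${\bf p}\in N(f)\cap\Z^n$ and write ${\bf p}=\sum_{i=1}^k t_i{\bf a}_i$ with $t_i\geq 0$ and $\sum_i t_i=1$; these barycentric coordinates are unique by affine independence. Restricting this identity to the coordinates in $supp({\bf a}_i)$ and using disjointness, the relevant coordinates of ${\bf p}$ are exactly the integers $t_i a_{i,l}$ for $l\in supp({\bf a}_i)$, so $t_i$ is rational, and writing $t_i = p_i/q_i$ in lowest terms gives $q_i \mid a_{i,l}$ for every $l\in supp({\bf a}_i)$, hence $q_i \mid d_i$ where $d_i$ denotes the gcd of the entries of ${\bf a}_i$. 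Thus $t_i \in \tfrac{1}{d_i}\Z_{\geq 0}$. The pairwise gcd hypothesis says precisely that the $d_i$ are pairwise coprime, since $\gcd({\bf a}_i,{\bf a}_j)=\gcd(d_i,d_j)$. Now multiply $\sum_i t_i = 1$ through by $D:=\prod_i d_i$: each $D t_i$ is an integer, and for $i\neq j$ one has $d_j \mid D t_i$ because $d_j$ is a factor of $D/d_i$ while $d_i t_i\in\Z$; together with $d_j\mid D$ this yields $d_j \mid D t_j$, and unwinding the divisibilities via the coprimality of the $d_i$ forces $q_j=1$, i.e.\ $t_j\in\Z_{\geq 0}$. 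Non-negative integers summing to $1$ forces exactly one $t_j$ to equal $1$ and the rest to vanish, so ${\bf p}={\bf a}_j$ is a vertex. When some ${\bf a}_{i_0}=0$, the gcd hypothesis forces $d_j=1$ for all $j\neq i_0$, so the $t_j$ with $j\neq i_0$ are already non-negative integers and the same conclusion follows at once.

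The main obstacle is this last number-theoretic step: upgrading ``$t_i$ has denominator dividing $d_i$'' to ``$t_i\in\Z$'' genuinely requires the $d_i$ to be pairwise coprime. Without that hypothesis a point such as $\tfrac12{\bf a}_1+\tfrac12{\bf a}_2$ would be a non-vertex lattice point of the simplex (indeed an interior one when $k=2$), which is exactly the behavior the well-poised condition rules out via \cref{maintheorem}. Everything else — the affine independence argument and the bookkeeping around a possible constant term — is routine.
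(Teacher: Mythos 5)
Invoking \cref{maintheorem} at the outset introduces a circularity: in this paper, \cref{interiorlatticepoints} is proved \emph{before} \cref{maintheorem}, and the forward direction of \cref{maintheorem} explicitly relies on the conclusion that every monomial term of $f$ sits at a vertex of $N(f)$ --- which is precisely the ``no non-vertex lattice points'' claim you are trying to prove. The hypotheses that are actually available at this point in the logical development are the Section~2 lemmas. In particular, \cref{WellPDis} only controls the monomials at \emph{vertices} of $N(f)$; it does not, on its own, rule out a monomial term of $f$ sitting at a non-vertex lattice point of $N(f)$, and excluding that possibility is the job of the theorem you are proving. So ``by \cref{maintheorem}, $f$ has exponent vectors with pairwise disjoint supports and pairwise gcd $1$'' cannot be the starting point.

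That said, the substantive mathematics in your proposal is sound and in fact tidier than the paper's treatment of \cref{VertPoints}. The affine-independence argument is exactly \cref{PolyDis}. Your lattice-point step --- that the barycentric coordinate $t_i$ of a lattice point has denominator dividing $d_i = \gcd(\ba_i)$, and that pairwise coprimality of the $d_i$ then forces every $t_i$ to be a non-negative integer summing to $1$, hence a vertex --- is a clean, self-contained replacement for the paper's fraction-lemma argument in the proof of \cref{VertPoints}, and it also handles the constant-term case ($\ba_{i_0}=0$, forcing all other $d_j=1$) correctly. The fix is to replace the appeal to \cref{maintheorem} with what the paper actually uses: \cref{WellPDis} to get disjoint supports for the vertex exponents, your affine-independence argument (i.e.\ \cref{PolyDis}) to get the simplex, \cref{RedBinom} applied to the binomial initial forms on the edges to get $\gcd(\ba_i,\ba_j)=1$ for vertex pairs, and then your number-theoretic argument to conclude $N(f)\cap\Z^n$ is the vertex set. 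With that substitution the proposal becomes a correct proof, and arguably a streamlining of \cref{VertPoints}.
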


In Section \ref{Angela} we determine the structure of the tropical variety of a polynomial with disjoint supports. We would also like to note here that for these disjointly supported hypersurfaces, (and consequently well-poised hypersurfaces), the computation of the singular locus is a straightforward excerise in computing the Jacobian. By examining these conditions, one can easily compute the codimension of the singular locus and determine normality by applying Serre's criterion.

\begin{example}[E8 Singularity]
\label{E8}
\emph{
The Du Val $E_8$ singularity is given by the solution set of $x^2 + y^3 + z^5=0$. This is both  well-poised and normal. The hypersurface $x^2 + y^3 + z^5=0$ defines a normal, rational, complexity-1 $T$-variety as studied in \cite{Ilten-Manon}. In particular, this is a \emph{semi-canonical} embedding of the hypersurface, and is therefore well-poised by the results in \cite{Ilten-Manon}.  
}
\end{example}

\begin{example}[The Grassmannian $Gr_2(4)$]
\label{Grass}
\emph{
The affine cone over the Pl\"ucker embedding of the Grassmannian variety $Gr_2(4)$ is given by the solution set of  $p_{12}p_{34} - p_{13}p_{24} + p_{14}p_{23}=0$ in $\bigwedge^2(\K^4)$. A simple application of our results show that this is well-poised. 
}
\end{example}

Polynomials of the type described in Theorem \ref{maintheorem} have appeared recently in work of Hausen, Hische, and Wr\"obel on Mori dream spaces of low Picard number (\cite{HHW}).  The Cox rings of smooth general arrangement varieties of true complexity $2$ and Picard number $2$ of all types except $14$ are presented by well-poised hypersurfaces. Type $14$ is well-poised as well, but it is not a hypersurface.  As a consequence, any projective coordinate ring of one of these varieties carries a full rank valuation with finite Khovanskii basis for each maximal cone of the tropicalization of the hypersurface, and the varieties themselves have a toric degeneration for each maximal cone of the tropical variety of the hypersurface. We illustrate this construction with an example of a singular del Pezzo surface. We remark that the spectra of these Cox rings are also arrangement varieties, and so can be proved to be well-poised by recent results of Joseph Cummings and the third author \cite{Cummings-Manon} by a different method. 

\begin{example}
\emph{
We consider the example of the singular, $\Q$-factorial Gorenstein del Pezzo surface $X$ described in \cite[Example 3.2.1.6]{Cox-book}. The Cox ring of $X$ is presented by a well-poised hypersurface:}
\[Cox(X) = \K[T_1,T_2,T_3,T_4,T_5]/\langle T_1T_2+T_3^2+T_4T_5\rangle.\]

\noindent
\emph{The lineality space of $T_1T_2+T_3^2+T_4T_5$ has basis given by the rows of the matrix:}

\[M = \begin{bmatrix}
1 & 1 & 1 & 1 & 1\\
1 & -1 & 0 & -1 & 1\\
1 & 1 & 1 & 0 & 2\\
\end{bmatrix}.\]

\emph{
\noindent
By Theorem \ref{maintheorem}, we obtain a toric degeneration of $Cox(X)$ for each of the three maximal cones of $\Trop(\langle T_1T_2+T_3^2+T_4T_5 \rangle)$.  The corresponding \emph{global Newton-Okounkov bodies} of $X$ are obtained as the $\Q_{\geq 0}$ spans of the columns of the matrices $M_1, M_2, M_3$ obtained by appending $w_1 = (1, 1, 1, 0, 0)$, $w_2 = (1, 1, 0, 1, 1)$, or $w_3 = (0, 0, 1, 1, 1)$ to the bottom of $M$, respectively. }

\emph{The grading on $Cox(X)$ defined by the 2nd and 3rd rows of $M$ coincides with the class group grading. Let $M'$ be the matrix given by these rows, so that the effective classes on $X$ coincide with the $\Z_{\geq 0}$ span of the columns of $M'$. For $(i, j)$ we let $\Gamma(i,j) \subset Cox(X)$ be the $(i, j)$-th graded component.  The polynomial ring $S = \K[T_1, T_2, T_3, T_4, T_5]$ also inherits a grading by the columns of $M'$, and for each $(i, j)$ there is a surjection $S(i, j) \to \Gamma(i, j) \to 0$. The initial forms $in_{w_i}(T_1T_2+T_3^2+T_4T_5)$ are each homogeneous with respect to the grading by $M'$, as a consequence $S(i, j)$ also surjects onto the $(i, j)$-th component of each degeneration $\K[T_1, T_2, T_3, T_4, T_5]/\langle in_{w_i}(T_1T_2 + T_3^2 + T_4T_5)\rangle$.}  

	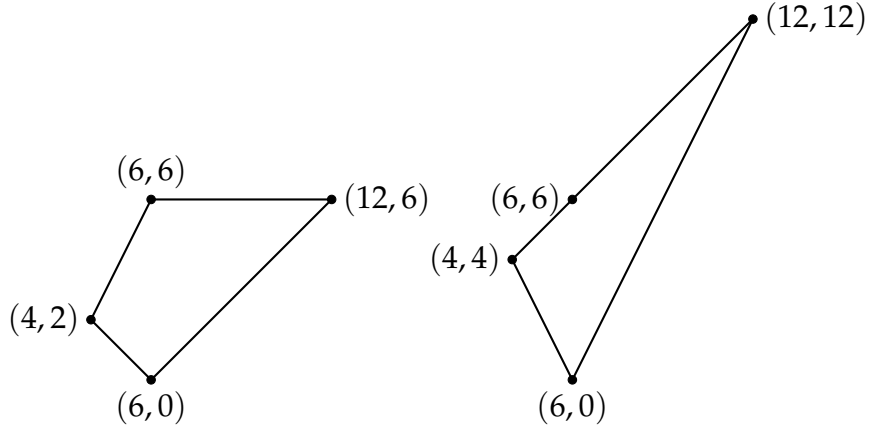
\begin{figure}
		\centering
		\begin{tikzpicture}[scale=.8]
		
		
		\draw[thick] (3,0) coordinate (a_1) -- (2,1) coordinate (a_2);
		\draw[thick] (a_2) -- (3, 3) coordinate (a_3);
		\draw[thick] (a_3) -- (6, 3) coordinate (a_4);
		\draw[thick] (a_4) -- (a_1);
		\filldraw [black] (a_1) circle (2pt) node [anchor=north] {$(6, 0)$};
		\filldraw [black] (a_2) circle (2pt) node [anchor=east] {$(4, 2)$};
		\filldraw [black] (a_3) circle (2pt) node [anchor=south] {$(6, 6)$};
		\filldraw [black] (a_4) circle (2pt) node [anchor=west] {$(12, 6)$};
		
		\draw[thick] (10,0) coordinate (b_1) -- (9,2) coordinate (b_2);
		\draw[thick] (b_2) -- (10, 3) coordinate (b_3);
		\draw[thick] (b_3) -- (13, 6) coordinate (b_4);
		\draw[thick] (b_4) -- (b_1);
		\filldraw [black] (b_1) circle (2pt) node [anchor=north] {$(6, 0)$};
		\filldraw [black] (b_2) circle (2pt) node [anchor=east] {$(4, 4)$};
		\filldraw [black] (b_3) circle (2pt) node [anchor=east] {$(6, 6)$};
		\filldraw [black] (b_4) circle (2pt) node [anchor=west] {$(12, 12)$};

		
		
		\end{tikzpicture}
		\caption{Newton-Okounkov bodies $\Delta_1(X, (0, 6))$ and $\Delta_2(X, (0, 6))$.}
		\label{NOKbodies}
	\end{figure}

\emph{The divisor class $(0, 6)$ is ample; let $R = \bigoplus_{n \geq 0} \Gamma(0, 6n)$ be its projective coordinate ring.  The space $S(0, 6)$ has basis given by the $23$ monomials $T^{\bf a}$ such that ${\bf a} = (a_1, a_2, a_3, a_4, a_5) \in \Z_{\geq 0}^5$ satisfies $a_1 -a_2 -a_4 + a_5 = 0$ and $a_1 + a_2 + a_3 + 2a_5 = 6$.  The subring $S_{0, 6} = \bigoplus_{ n \geq0} S(0, 6n) \subset S$ can be shown to be generated by the component $S(0, 6)$. As a consequence, $R$ and each of the three degenerations \[gr_i(R) \subset \K[T_1, T_2, T_3, T_4, T_5]/\langle in_{w_i}(T_1T_2 + T_3^2 + T_4T_5)\rangle,\ i \in \{1, 2, 3\}\] are generated by their $(0, 6)$ components as well. It follows that the image of the $23$ monomials in $\Gamma(0, 6) \subset R$ are a Khovanskii basis for each of the three valuations defined by the matrices $M_1, M_2$ and $M_3$ on $R \subset Cox(X)$.  The ideal of relations which vanishes on these generators is generated by $117$ elements: $4$ linear, $112$ quadratic, and $1$ sextic.  Two of the three Newton-Okounkov bodies $\Delta_i(X, (0, 6))$ $i \in \{1, 2, 3\}$ of the class $(0, 6)$ emerging from this construction are depicted in Figure \ref{NOKbodies}, and $\Delta_3(X, (0, 6)) = \Delta_1(X, (0, 6))$. Each body $\Delta_i(X, (0, 6))$ is the image of the poytope $P(0, 6) \subset \Q_{\geq 0}^5$ cut out by the relations $a_1 -a_2 -a_4 + a_5 = 0$ and $a_1 + a_2 + a_3 + 2a_5 = 6$ under the linear projection defined by the matrix with rows $(1, 1, 1, 1, 1)$ and $w_i$, respectively. Both Newton-Okounkov bodies have volume $24$.}
\end{example}

The Cox ring of a projective variety with a finitely generated and free class group is known to be factorial \cite{Arz}. With this in mind, we ask the following question.

\begin{question}
For $f$ well-poised, when is the ring $\K[\x]/\langle f \rangle$ factorial?  In other words, when does $f$ well-poised present the Cox ring of a Mori dream space?
\end{question}

\noindent
An answer to this question would provide a large class of Cox rings of Mori dream spaces, each equipped with a large combinatorial class of toric degenerations. 

\subsection{Acknowledgements}

We thank David Ma and Alston Crowley for many useful conversations.  We also thank the UK Math Lab for hosting this project in the spring and fall of 2018.  The third author was supported by both the NSF (DMS-1500966) and the Simons Foundation (587209) during this project.

\section{The Newton Polytope and Supporting Lemmas} \label{ProofIL}
Here we prove the  results necessary to establish \cref{maintheorem} and \cref{interiorlatticepoints} in \cref{section3}.  These proofs rely on the properties of the Newton polytope of a hypersurface. Terminology and notation are taken from \cite{Maclagan-Sturmfels}. 

\begin{definition} 
	The Newton polytope $N(f)$ of a polynomial $f = \sum c_i \x^{\ba_i}$ is the convex hull of the set $\{{{\bf a}_i} : c_i \neq 0\} \subset \R^n$. \cite[61]{Maclagan-Sturmfels} 
\end{definition}

Recall that the faces of $N(f)$ are in one-to-one correspondence with the initial forms $\ini{w}(f).$ In particular, each face is of the form $N(\ini{w}(f))$ for some weight vector ${w}$, and if $N(f)$ is a simplex with no interior lattice points, then every sub-sum of $f = \sum c_i\x^{{\bf a_i}}$ is an initial form of $f$.  Also, recall that if $f=pq$ then $N(f) = N(p) + N(q)$ where the right side denotes the Minkowski sum of the Newton polytopes (\cite{Sturmfels96}). The  lemmas in this section serve to restrict the combinatorial type of $N(f)$ when $f$ is well-poised.

\begin{restatable}{lemma}{WellPDis}
	\label{WellPDis}
	If $f$ is well-poised, then all monomials corresponding to the vertices of $N(f)$ have disjoint supports. 
\end{restatable}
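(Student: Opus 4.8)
The plan is to argue by contradiction, exploiting the fact that vertices of $N(f)$ are visible as initial forms of one-term sub-sums while edges are visible as two-term sub-sums. Suppose $f$ is well-poised but two vertex monomials $\x^{\ba_i}$ and $\x^{\ba_j}$ have overlapping supports, i.e. there is a coordinate $k$ with $a_{i,k} > 0$ and $a_{j,k} > 0$. Since $\ba_i$ and $\ba_j$ are vertices of $N(f)$, the segment $[\ba_i, \ba_j]$ is an edge (or at least a face) of $N(f)$, so there is a weight vector $w$ with $\ini{w}(f) = c_i\x^{\ba_i} + c_j\x^{\ba_j}$ (if more vertices lie on that face, first pick $w$ exposing the whole face, then perturb within the face to expose just the edge). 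This binomial is not a monomial, so well-poisedness forces it to be irreducible.

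Next I would show such a binomial is in fact reducible, contradicting the above. Write $d = \gcd(a_{i,k}, a_{j,k}) \geq 1$ and more generally let $\bu$ be the componentwise minimum of $\ba_i$ and $\ba_j$; by assumption $\bu \neq \mathbf{0}$ since $u_k \geq 1$. Then $c_i\x^{\ba_i} + c_j\x^{\ba_j} = \x^{\bu}\bigl(c_i\x^{\ba_i - \bu} + c_j\x^{\ba_j - \bu}\bigr)$, and $\x^{\bu}$ is a nonunit, nonconstant factor. The remaining factor $c_i\x^{\ba_i - \bu} + c_j\x^{\ba_j - \bu}$ is not a unit (it has at least two distinct monomials because $\ba_i \neq \ba_j$, the two vertices being distinct), so the binomial is a product of two nonunits — hence reducible. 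This contradicts irreducibility, so no such overlap $k$ can exist, proving the claim.

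The one genuine subtlety — and the step I expect to need the most care — is the reduction from an arbitrary face of $N(f)$ spanned by $\ba_i, \ba_j$ (together possibly with other vertices) to an edge, so that the associated initial form is exactly the two-term binomial rather than a longer sub-sum. The cleanest way is to recall from the discussion preceding the lemma that faces of $N(f)$ correspond bijectively to initial forms: pick any $w$ whose maximizing face of $N(f)$ contains the segment $[\ba_i,\ba_j]$, then replace $w$ by $w + \epsilon w'$ where $w'$ is generic, so that among the lattice points in the original face the maximum of $\langle \cdot, w'\rangle$ is attained along an edge incident to one of the two vertices; iterating, one arrives at a weight exposing a one-dimensional face, and if that edge is $[\ba_i,\ba_j]$ we are done. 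A slicker alternative avoids faces entirely: since $\ba_i$ is a \emph{vertex} of $N(f)$, there is a $w$ with $\langle \ba_i, w\rangle > \langle \ba_\ell, w\rangle$ for all $\ell \neq i$, giving $\ini{w}(f) = c_i\x^{\ba_i}$; perturbing $w$ slightly toward $\ba_j$ yields a $w$ with $\ini{w}(f)$ a sub-sum still containing $\x^{\ba_i}$, but now I want it to contain $\x^{\ba_j}$ as the \emph{only} other term, which again requires a genericity argument. Either way, once the binomial $c_i\x^{\ba_i}+c_j\x^{\ba_j}$ is realized as an initial form, the factorization above closes the argument immediately.
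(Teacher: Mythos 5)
The core factoring observation — if a binomial initial form $c_i\x^{\ba_i} + c_j\x^{\ba_j}$ has overlapping supports, pull out $\x^{\bu}$ with $\bu$ the coordinatewise minimum to exhibit a reducible factorization — is the same as the paper's. The problem is the reduction that produces the binomial. You assume that two vertices $\ba_i,\ba_j$ of $N(f)$ with overlapping supports can be exposed together as a binomial initial form, i.e.\ that $[\ba_i,\ba_j]$ is a face of $N(f)$. That is false in general: two vertices of a polytope need not be joined by an edge, and the segment between them can pass through the relative interior (think of two non\-adjacent vertices of a pentagon). You flag this yourself as ``the one genuine subtlety,'' but neither of your proposed repairs closes it: perturbing a weight vector supported near one vertex will expose \emph{some} incident edge, but there is no reason it is the edge to $\ba_j$ (which may not exist), and you explicitly admit ``if that edge is $[\ba_i,\ba_j]$ we are done,'' which is exactly the point in doubt.

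The paper sidesteps this by not working with an arbitrary pair. Fix a coordinate $j$ and let $S$ be the set of vertices with positive $j$-th coordinate; take $\overline{\ba}\in S$ maximizing the $j$-th coordinate. Every edge of $N(f)$ incident to $\overline{\ba}$ is a genuine binomial initial form, and if its other endpoint were also in $S$ the binomial would factor out a power of $x_j$ — so all neighbors of $\overline{\ba}$ lie in $\{x_j = 0\}$, and one then argues no other vertex can have positive $j$-th coordinate. To make your version rigorous you would need a statement like: if two vertices have positive $j$-th coordinate, then \emph{some} edge of $N(f)$ has both endpoints with positive $j$-th coordinate. This is true (e.g.\ by following a monotone edge path in $e_j$ from one such vertex to the $e_j$-maximizer, which never decreases the $j$-th coordinate), but it is a different and stronger claim than ``the segment $[\ba_i,\ba_j]$ is a face,'' and it is the claim you actually need. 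As written, your proof has a gap at precisely the step you identified as delicate.
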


\begin{proof}
	Let $x_j$ be an indeterminant in $f$ and let \[S:=\{{\bf a_i} : j\in supp( \x^{\bf a_i}), {\bf a_i} \text{ is a vertex}\}\] denote the corresponding vertex set in $N(f)$, corresponding to monomials that contain $x_j$. Note that this set exclusively considers vertices and not interior lattice points. We show that this set contains only one element. 
	
	First, note that any face of $N(f)$ corresponds to an initial form of $f$. Should any of the vertices ${\bf a_i} \in S$  share a (potentially degenerate) edge, this would correspond to a factorable initial binomial (or more general form, if the edge is degenerate) of $f$, as the lowest power of $x_j$ could be factored out. Therefore for $f$ to be well-poised, no vertices in $S$ may share an edge. Let ${\bf e_j}$ be the unit basis vector with respect to the $j^{th}$ coordinate and choose a vertex $\overline{\bf{{a_i}}}$ in $S$  such that the interior product $\langle \overline{\bf{a_i}}, \bf{e_j}\rangle$ is maximal. Now, consider the edges from $\overline{\bf{{a_i}}}$ to its adjacent vertices. Notice that by our above reasoning, $\overline{\bf{{a_i}}}$ cannot connect to any other member of $S$, so all edges connect to vertices which lie in the subspace of $\R^n$ homeomorphic to $\R^{n-1}$ corresponding to $x_j=0$.
	
	We can conclude now that $\overline{\bf{a_i}}$ is the only element in $S$. If we suppose there is some other vertex in $S$, $\mathbf{a'}$, we see we are forced to choose between convexity or irreducibilty, as if these two vertices are not joined by some ray, then the polytope is not convex, but if they are joined, then the edge corresponds to a reducible initial form. Therefore, there can only be one element in $S$, and our monomials will have disjoint supports.

\end{proof}

\begin{restatable}{lemma}{PolyDis}
	\label{PolyDis}
	If the monomials corresponding to vertices in $N(f)$ have disjoint supports, then $N(f)$ is a simplex. 
\end{restatable}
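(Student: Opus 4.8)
The plan is to show directly that if the vertices $\mathbf{a}_1, \ldots, \mathbf{a}_k$ of $N(f)$ have pairwise disjoint supports, then they are affinely independent, which is exactly the statement that $N(f)$ is a simplex. First I would fix the list of vertices $\mathbf{a}_1, \ldots, \mathbf{a}_k$ and, for each $\ell$, let $S_\ell = supp(\mathbf{x}^{\mathbf{a}_\ell}) \subseteq \{1, \ldots, n\}$; by hypothesis these sets are pairwise disjoint. Suppose for contradiction that the vertices are affinely dependent, i.e.\ there exist scalars $\lambda_1, \ldots, \lambda_k \in \R$, not all zero, with $\sum_\ell \lambda_\ell = 0$ and $\sum_\ell \lambda_\ell \mathbf{a}_\ell = \mathbf{0}$.

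The key step is to read off the vector equation coordinate by coordinate using disjointness of supports. Pick any index $\ell_0$ with $\lambda_{\ell_0} \neq 0$. If $S_{\ell_0} \neq \varnothing$, choose a coordinate $j \in S_{\ell_0}$; then $a_{\ell_0, j} \neq 0$, while $a_{\ell, j} = 0$ for every $\ell \neq \ell_0$ since $j \notin S_\ell$. The $j$-th coordinate of $\sum_\ell \lambda_\ell \mathbf{a}_\ell = \mathbf{0}$ then reads $\lambda_{\ell_0} a_{\ell_0, j} = 0$, forcing $\lambda_{\ell_0} = 0$, a contradiction. Hence the only index with a nonzero coefficient must have empty support — but $\mathbf{a}_\ell$ has empty support exactly when $\mathbf{x}^{\mathbf{a}_\ell} = 1$ is the constant monomial, and there can be at most one such vertex (it is the origin). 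So at most one $\lambda_\ell$ is nonzero; but then $\sum_\ell \lambda_\ell = 0$ forces that one to vanish as well, so all $\lambda_\ell = 0$, contradicting our assumption. Therefore the vertices are affinely independent and $N(f)$ is a simplex.

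I do not anticipate a serious obstacle here; the argument is essentially linear algebra once the disjoint-support structure is exploited. The only point requiring a small amount of care is the bookkeeping around the constant monomial (the vertex at the origin, whose support is empty): disjointness of supports is vacuous for it, so one cannot extract a contradiction from a nonvanishing $\lambda$ attached to it via a coordinate of the equation $\sum_\ell \lambda_\ell \mathbf{a}_\ell = \mathbf{0}$, and instead one leans on the affine relation $\sum_\ell \lambda_\ell = 0$ together with the fact that every other coefficient has already been shown to vanish. An alternative, perhaps cleaner, route is to translate so that one vertex sits at the origin and then observe that the remaining vertices, having disjoint nonempty supports, have linearly independent coordinate vectors (each "lives" in its own block of coordinates), giving affine independence of the original vertex set immediately.
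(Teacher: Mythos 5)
Your argument is correct and follows the same route as the paper: the paper's proof simply asserts that disjoint supports imply the exponent vectors are affinely independent, and you supply the coordinate-wise verification (including the careful handling of a possible constant term) that the paper leaves implicit. Nothing to change.
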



\begin{proof}
Since the monomials have disjoint supports, the corresponding exponent 
vectors are affinely independent - thus $N(f)$ is the convex hull of an 
affinely independent set of points, i.e a simplex. 
\end{proof}

\cref{RedBinom} and \cref{interiorlatticepoints} require the following lemma, which restricts the number of lattice points in $N(f)$, when $f$ is of a specific form. The proof of this lemma is reserved for the end of this section to improve readability. 

\begin{restatable}{lemma}{VertPoints}
\label{VertPoints}
Let $f$ be a polynomial such that the vertices of $N(f)$ have disjoint supports. If $f$ also has $\gcd({\bf a_i, a_j})=1$ for all pairs of vertices $\bf{a_i, a_j}$, then $N(f)$ contains no lattice points besides its vertices.
\end{restatable}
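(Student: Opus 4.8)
The plan is to reduce the statement to a lattice-point count on a simplex whose vertices have pairwise disjoint supports, and then exploit the disjointness to decompose the problem coordinate-block by coordinate-block. Suppose the vertices of $N(f)$ are ${\bf a}_0, {\bf a}_1, \ldots, {\bf a}_k$, with $supp(\x^{{\bf a}_i}) \cap supp(\x^{{\bf a}_j}) = \varnothing$ for $i \neq j$; by \cref{PolyDis} these points are affinely independent, so $N(f)$ is a $k$-simplex, and after reindexing the variables we may assume ${\bf a}_i$ is supported on a block $B_i$ of coordinates, with the $B_i$ disjoint (any coordinates in no block are irrelevant and may be dropped). A lattice point ${\bf p} \in N(f)$ is a convex combination ${\bf p} = \sum_{i=0}^k t_i {\bf a}_i$ with $t_i \geq 0$, $\sum t_i = 1$. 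Reading off the coordinates in block $B_i$, we see the $B_i$-part of ${\bf p}$ equals $t_i \cdot ({\bf a}_i)|_{B_i}$; since the blocks are disjoint, these constraints are independent across $i$.

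The key step is then: for each $i$, $t_i ({\bf a}_i)|_{B_i} \in \Z^{B_i}$ forces $t_i$ to be a rational number of a controlled denominator. Writing ${\bf a}_i = (a_{i,j})_{j \in B_i}$, the requirement $t_i a_{i,j} \in \Z$ for all $j \in B_i$ means $t_i \in \tfrac{1}{d_i}\Z$ where $d_i = \gcd\{a_{i,j} : j \in B_i\}$, i.e. $d_i = \gcd({\bf a}_i, {\bf a}_i)$ in the paper's convention — but this is exactly where I must bring in the hypothesis $\gcd({\bf a}_i, {\bf a}_j) = 1$ for all pairs. I would argue as follows: if some $d_i \geq 2$, then every entry of ${\bf a}_i$ is divisible by $d_i$, and I claim this propagates to a common factor among the whole vertex set via the simplex structure — more precisely, I should look not at a single $d_i$ but at $d := \gcd$ of all coordinates appearing; if ${\bf p}$ is an interior or boundary lattice point other than a vertex, at least two of the $t_i$ are in $(0,1)$, and combining $t_i a_{i,j} \in \Z$ with $t_{i'} a_{i',j'} \in \Z$ and $\sum t_i = 1$ cleared to a common denominator yields an integer relation forcing a nontrivial common divisor of $\{a_{i,j}, a_{i',j'}\}$, contradicting $\gcd({\bf a}_i, {\bf a}_{i'}) = 1$. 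I expect the cleanest route is: clear denominators in $\sum t_i = 1$, write $t_i = m_i / D$ with $D = \mathrm{lcm}$ of the denominators, note $D \mid a_{i,j}$ whenever $t_i \notin \Z$, so the set of indices $i$ with $t_i \notin \Z$ contributes a common divisor $D > 1$ to their exponent vectors, and if two such indices exist this contradicts the pairwise-gcd-one hypothesis; if only one such index $i_0$ exists then $t_{i_0} = 1$ and all other $t_i = 0$, so ${\bf p} = {\bf a}_{i_0}$ is a vertex.

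The main obstacle I anticipate is the bookkeeping in that last step: handling the case where exactly one $t_i$ is a proper fraction (where I need the constraint $\sum t_i = 1$ to rule out $0 < t_{i_0} < 1$ with all others integral, which forces the others to be $0$ and $t_{i_0}$ to be a unit fraction whose denominator divides every entry of ${\bf a}_{i_0}$ — and then I need a separate small argument, perhaps using a second vertex ${\bf a}_j \neq {\bf a}_{i_0}$ and $\gcd({\bf a}_{i_0}, {\bf a}_j) = 1$, to conclude the denominator is $1$). A careful statement is that $\gcd({\bf a}_i, {\bf a}_j) = 1$ for a \emph{pair} with $i \neq j$ already forbids any single ${\bf a}_i$ from having all entries divisible by $d \geq 2$ unless... actually $\gcd({\bf a}_i,{\bf a}_j)=1$ does not by itself prevent ${\bf a}_i$ alone from having a common entry-divisor, so the disjoint-support + convexity structure is essential and must be used to tie the fractional $t_i$'s together through the affine relation $\sum t_i = 1$. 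Once that combinatorial lemma is isolated and proved, the rest — that no lattice point of $N(f)$ other than a vertex exists, hence in particular no interior lattice points — follows immediately, and \cref{interiorlatticepoints} is then just \cref{PolyDis} together with this lemma.
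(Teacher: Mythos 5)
Your overall strategy is sound and parallels the paper's: decompose the convex combination $\mathbf{p} = \sum t_i \mathbf{a}_i$ block by block via disjoint supports, deduce that each coefficient $t_i$ is a rational whose lowest-terms denominator $q_i$ divides $d_i := \gcd\{a_{i,j} : j \in B_i\}$, and derive a contradiction from $\sum t_i = 1$ together with the pairwise gcd hypothesis. However, the closing step as written has a genuine error. You set $D = \mathrm{lcm}$ of the denominators and assert $D \mid a_{i,j}$ whenever $t_i \notin \Z$. This is false: only $q_i$ divides the entries of $\mathbf{a}_i$ in its own block, and $D$ is in general strictly larger than $q_i$. Consequently the claim that all non-integral indices share a common divisor $D > 1$ of their exponent vectors, and the ensuing contradiction with $\gcd(\mathbf{a}_i, \mathbf{a}_{i'}) = 1$, do not follow.

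The fix uses the ingredients you already have, routed differently. Since $q_i \mid d_i$ and $\gcd(d_i, d_{i'}) = 1$ for $i \neq i'$, the denominators $q_i$ for $i$ in $F := \{i : t_i \notin \Z\}$ are pairwise coprime and each exceeds $1$. Because $\sum_{i \in F} t_i = 1 - \sum_{i \notin F} t_i$ is an integer, clearing denominators and reducing modulo a fixed $q_{i_0}$ with $i_0 \in F$ shows $p_{i_0} \prod_{i \in F, i \neq i_0} q_i \equiv 0 \pmod{q_{i_0}}$; the product is a unit mod $q_{i_0}$ by coprimality, so $q_{i_0} \mid p_{i_0}$, contradicting lowest terms. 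Hence $F = \varnothing$, all $t_i \in \Z_{\geq 0}$ summing to $1$, exactly one $t_i = 1$, and $\mathbf{p}$ is a vertex. Your worry at the end about the case $|F| = 1$ is moot: a single non-integral $t_{i_0}$ would make $t_{i_0} = 1 - \sum_{i \neq i_0} t_i$ an integer, an immediate contradiction, so no separate unit-fraction argument is needed. With this repair your argument is essentially the paper's, which organizes the same idea by first recording that each barycentric coordinate equals $\gcd(\mathbf{b}_j)/\gcd(\mathbf{a}_j)$ and then clearing denominators in $\sum p_j = 1$ directly.
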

Now, we prove \cref{RedBinom}. 
\begin{restatable}{lemma}{RedBinom}
	\label{RedBinom}
	A binomial $f=c_{i} \x^{\bf a_i} + c_{j} \x^{\bf a_j}$ is irreducible if and only if $\gcd(\bf a_i, a_j)=1$ and $supp(x^{{\bf a}_i})\cap supp(x^{{\bf a}_j})=\varnothing$.
\end{restatable}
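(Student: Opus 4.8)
The plan is to prove both directions. For the "if" direction, suppose $\gcd({\bf a}_i, {\bf a}_j) = 1$ and the supports are disjoint; I want to show $f = c_i\x^{{\bf a}_i} + c_j\x^{{\bf a}_j}$ is irreducible. Suppose $f = pq$ for non-units $p, q$. Using the fact that $N(f) = N(p) + N(q)$ (Minkowski sum), and that $N(f)$ is an edge (a one-dimensional simplex, by \cref{PolyDis}), both $N(p)$ and $N(q)$ must be parallel segments (or points) lying along the direction ${\bf a}_j - {\bf a}_i$. Since the supports of $\x^{{\bf a}_i}$ and $\x^{{\bf a}_j}$ are disjoint, after factoring out the common monomial $\gcd$-part (which is $1$ by hypothesis) one reduces to the situation where the two exponent vertices have no shared variables and no common factor. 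I would then argue directly: write $p$ and $q$ in terms of their monomial terms; since every monomial of $p$ times every monomial of $q$ must land in $\{\x^{{\bf a}_i}, \x^{{\bf a}_j}\}$, and these two monomials involve disjoint sets of variables, $p$ and $q$ can each only involve one "end" of the segment — but then the product would be a single monomial unless one of them is a binomial spanning the whole edge, forcing the other to be a monomial, hence a unit times $\x^{\bf b}$. If that monomial is a non-trivial monomial $\x^{\bf b}$, then $\x^{\bf b}$ divides both $\x^{{\bf a}_i}$ and $\x^{{\bf a}_j}$, contradicting $\gcd({\bf a}_i, {\bf a}_j) = 1$ together with disjoint support (which forces $\x^{\bf b} = 1$). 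So $p$ or $q$ is a unit, and $f$ is irreducible.

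For the "only if" direction, I argue contrapositively. If $\gcd({\bf a}_i, {\bf a}_j) = d = (d_1, \ldots, d_n) \neq (0,\ldots,0)$, write $\x^{\bf d}$ for the monomial with exponent vector $d$; then $f = \x^{\bf d}(c_i \x^{{\bf a}_i - {\bf d}} + c_j \x^{{\bf a}_j - {\bf d}})$, which is a non-trivial factorization since $\x^{\bf d}$ is a non-unit. If instead the supports overlap, say $k \in supp(\x^{{\bf a}_i}) \cap supp(\x^{{\bf a}_j})$, then $x_k \mid \gcd({\bf a}_i, {\bf a}_j)$, so $x_k$ divides $f$, again giving a non-trivial factorization. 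Either way $f$ is reducible, completing the contrapositive. (In fact, the two failure conditions collapse: disjoint support plus $\gcd = 1$ is equivalent to "no variable divides both monomials," so the single clean statement is that $f$ is irreducible iff the two monomial terms share no common monomial factor.)

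The main obstacle is the "if" direction, specifically making rigorous the claim that in a factorization $f = pq$ the factors $p$ and $q$ cannot "mix" the two disjoint variable sets. The cleanest route is to observe that $N(p) + N(q) = N(f)$ is a segment from ${\bf a}_i$ to ${\bf a}_j$, so $N(p)$ and $N(q)$ are each a (possibly degenerate) subsegment of this segment, meaning every vertex of $N(p)$ is of the form ${\bf a}_i + t({\bf a}_j - {\bf a}_i)$; combined with integrality and disjoint support, the only lattice points on this segment are its endpoints (this is exactly \cref{VertPoints} applied to the edge, or can be seen directly: an interior lattice point $(1-t){\bf a}_i + t{\bf a}_j$ with $0 < t < 1$ would need $t\,{\bf a}_{j,\ell} \in \Z$ for all $\ell \in supp(\x^{{\bf a}_j})$ and $(1-t)\,{\bf a}_{i,\ell} \in \Z$ for all $\ell \in supp(\x^{{\bf a}_i})$, forcing $t = p/q$ with $q \mid \gcd$ of the relevant entries — which $\gcd = 1$ forbids). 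Hence $N(p)$ and $N(q)$ are each either a single endpoint or the whole segment; since their Minkowski sum is the whole segment and the endpoints are $\x^{{\bf a}_i}, \x^{{\bf a}_j}$ with disjoint support, one of $N(p), N(q)$ is a point, so one factor is a monomial $c\,\x^{\bf b}$, and $\x^{\bf b}\mid f$ forces $\x^{\bf b}$ to divide both $\x^{{\bf a}_i}$ and $\x^{{\bf a}_j}$; disjoint support then gives $\x^{\bf b} = 1$, so that factor is a unit.
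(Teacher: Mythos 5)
Your proof of the ``if'' direction ($\Leftarrow$) is essentially the paper's: decompose $N(f) = N(p) + N(q)$, observe that the segment $[{\bf a}_i,{\bf a}_j]$ has no lattice points other than its endpoints under the stated hypotheses, conclude that one of $N(p), N(q)$ is a point, and use disjoint support to force that point to be the origin. This is fine.

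The ``only if'' direction has a genuine error, rooted in a misreading of the gcd condition. The paper defines $\gcd({\bf a}_i,{\bf a}_j)$ to be the \emph{integer} gcd of \emph{all} entries of both exponent vectors, i.e.\ a single scalar $\gcd(a_{i,1},\ldots,a_{i,n},a_{j,1},\ldots,a_{j,n})$. You instead write ``$\gcd({\bf a}_i,{\bf a}_j) = d = (d_1,\ldots,d_n)$'' and factor out the monomial $\x^{\bf d}$, which is the coordinatewise-minimum (monomial gcd) interpretation; and your remark ``$x_k \mid \gcd({\bf a}_i,{\bf a}_j)$'' only makes sense under that interpretation. As a result your argument only handles the case in which the two monomials share a common \emph{variable}, which is already precisely the disjoint-support condition. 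The case that you actually need to rule out --- disjoint supports but scalar $\gcd > 1$ --- is not addressed. A concrete counterexample to your parenthetical claim (``the two failure conditions collapse $\ldots$ $f$ is irreducible iff the two monomial terms share no common monomial factor'') is $f = x^2 + y^2$ over an algebraically closed field: the supports $\{1\}$ and $\{2\}$ are disjoint, no variable divides both terms, yet $x^2 + y^2 = (x + \sqrt{-1}\,y)(x - \sqrt{-1}\,y)$ is reducible. The paper covers this case with the factorization via $d$-th roots of unity: when $\gcd({\bf a}_i,{\bf a}_j) = d > 1$ one writes ${\bf a}_i = d{\bf a}_i'$, ${\bf a}_j = d{\bf a}_j'$ and factors $c_i\x^{{\bf a}_i} + c_j\x^{{\bf a}_j}$ as a product of $d$ binomials of the form $\sqrt[d]{c_i}\,\x^{{\bf a}_i'} - \delta_k\sqrt[d]{c}\,\x^{{\bf a}_j'}$, where $\delta_k$ ranges over $d$-th roots of unity. (This is also where the hypothesis that $\K$ is algebraically closed enters; your proof never uses it, which is itself a warning sign.) Interestingly, your proof of the ``if'' direction quietly uses the correct scalar interpretation of $\gcd$ when you argue that an interior lattice point would force $q \mid$ all relevant entries, so the two halves of your argument are internally inconsistent about what $\gcd$ means.
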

\begin{proof}

	$\Leftarrow$ Consider the Newton polytope $L=N(f)$. This is a line segment with end points $\mathbf{a}_i$ and $\mathbf{a}_j$. Now suppose $f=pq$. We will show that one of the factors, say $q$, must be a constant. We must have $L=N(f)=N(p)+N(q)$.  As $f$ is a binomial in $\K[\mathbf{x}]$,  \[max(dim(N(p)),dim(N(q))) \leq dim(N(f))\leq 1\] \noindent we conclude that $N(q)$ is a line segment, a point distinct from the origin, or the origin itself. We assume without loss of generality that $N(p)$ is a line segment. If $N(q)$ is a point $\mathbf{a_0}$, then the monomial $\x^{{\bf a}_0}$ must divide $f$, so $\bf{a_i}$ and $\bf{a_j}$ do not have disjoint supports unless $a_0 = 0$.  If $N(p)$ and $N(q)$ are both lines, they must be colinear, as otherwise, $N(f)$ would be two-dimensional. The Minkowski sum of two colinear lines with integer endpoints must contain an interior lattice point, corresponding to the sum of one endpoint from each line. However, $f$ satisfies the form required in \cref{VertPoints}, and thus contains no interior lattice points. Therefore, $N(q)$ can only be the point at the origin, meaning that $q$ must be a constant. \\

	$\Rightarrow$
	If $f$ is irreducible, then $\x^{\bf a_i}$ and $\x^{\bf a_j}$ must have disjoint supports, as if they did not, we could factor out a power of $x_k$, where $k\in supp(\x^{\bf a_i})\cap supp(\x^{\bf a_j})$. Now, suppose $\gcd({\bf a_i, a_j})=d>1$, and ${\bf a'_i}d={\bf a_i}$, and ${\bf a'_j}d={\bf a_j}$. Now by rearranging constants, and factoring we get the following:
	
	\[
	f=c_{i} \x^{\bf a_i} + c_{j} \x^{\bf a_j}=c_i\x^{{\bf a'_i}d} - c \x^{{\bf a'_j}d}=(\sqrt[d]{c} \x^{\bf a'_j})^{d}((\frac{\sqrt[d]{c_i}\x^{{\bf a'_i}}}{\sqrt[d]{c}\x^{{\bf a'_j}}})^d - 1)
	\]
	Now, if we relabel $(\frac{\sqrt[d]{c_i}\x^{{\bf a'_i}}}{\sqrt[d]{c}\x^{{\bf a'_j}}})$ as $z$, the above simplifies to 
	\[f=c \x^{{\bf a'_j}d}(z^d - 1)\]
	where $z^d-1$ easily factors as $\Pi_{1<k<d}(z-\delta_k)$, where $\delta_k$ is a $d^{th}$ root of unity.  Now, by resubstituition and distributing, we get:
	\[
	\begin{split}
	f&=(\sqrt[d]{c} \x^{\bf a'_j})^{d}(z^d - 1)\\&=(\sqrt[d]{c} \x^{\bf a'_j})^d \Pi_{1<k<d}(z-\delta_k)\\
	&= \Pi_{1<k<d} ((\sqrt[d]{c} \x^{\bf a'_j})z-\delta_k \sqrt[d]{c} \x^{\bf a'_j}))\\&= \Pi_{1<k<d} (\sqrt[d]{c_i}\x^{\bf a'_i}-\delta_k \sqrt[d]{c} \x^{\bf a'_j}))
	\end{split}
	\]
	which gives that $f$ is factorable. 
	
\end{proof}

Now we return to the proof of \cref{VertPoints}, where we need the following lemma, whose proof is straight forward:
\begin{lemma}
	For a set of equivalent fractions $\{ \frac{b_1}{a_1}, \dots, \frac{b_s}{a_s} \}$, the fractions are equal to $\frac{\gcd(\bf b_i)}{\gcd(\bf a_j)}$, 	where $\gcd(\bf b_i)$ is the gcd of $b_1, b_2, \cdots, b_s$ and $\gcd(\bf a_j)$ is the gcd of $a_1, a_2, \cdots, a_s$. 
\end{lemma}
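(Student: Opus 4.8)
The plan is to reduce the whole statement to one elementary fact: for a positive integer $\lambda$ one has $\gcd(\lambda c_1,\ldots,\lambda c_s)=\lambda\,\gcd(c_1,\ldots,c_s)$. So first I would name the common value $r$ of the fractions $\tfrac{b_1}{a_1}=\cdots=\tfrac{b_s}{a_s}$ and write it in lowest terms, $r=\tfrac{p}{q}$ with $q\geq 1$ and $\gcd(p,q)=1$. (If $r=0$ then every $b_i=0$ and the identity is just $0=\tfrac{0}{\gcd(a_j)}$ with the convention $\gcd(0,\ldots,0)=0$, so I would dispose of this case at the outset.)

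Next, from $q b_i = p a_i$ for every $i$ together with $\gcd(p,q)=1$, Euclid's lemma gives $q\mid a_i$. Setting $t_i:=a_i/q\in\Z$, the relation $q b_i = p a_i$ forces $b_i = p\,t_i$, so that simultaneously $a_i=q\,t_i$ and $b_i=p\,t_i$ for all $i$. Then applying the scaling property of $\gcd$ twice yields $\gcd(a_1,\ldots,a_s)=q\,\gcd(t_1,\ldots,t_s)$ and $\gcd(b_1,\ldots,b_s)=|p|\,\gcd(t_1,\ldots,t_s)$, whence the quotient $\gcd(b_i)/\gcd(a_j)$ equals $p/q=r$, which is exactly the common value of the fractions.

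There is no real obstacle here — this is why the statement is billed as straightforward — so the ``hard part'' amounts only to bookkeeping. The two points deserving a sentence of care are: the degenerate case in which some, hence (by equality of the fractions) every, $b_i$ vanishes; and, if one allows signed integers rather than positive ones, keeping the signs of $p$ and $q$ consistent with those of the numerators and denominators so that $|p|/|q|$ may honestly be written as $p/q$. In the application inside the proof of \cref{VertPoints} the relevant integers are entries of (scaled) exponent vectors, so one may safely assume they are nonnegative and invoke the lemma in its simplest form.
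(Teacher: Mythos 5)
Your proof is correct. Note that the paper itself gives no proof of this lemma at all — it is stated with the remark that its proof is ``straightforward'' and then invoked inside the proof of \cref{VertPoints} — so there is nothing on the paper's side to compare against. Your argument (reduce to lowest terms $r=p/q$, use coprimality of $p$ and $q$ to deduce $a_i=qt_i$ and $b_i=pt_i$ for integers $t_i$, then apply the scaling identity $\gcd(\lambda c_1,\ldots,\lambda c_s)=\lambda\,\gcd(c_1,\ldots,c_s)$ twice) is the natural one, and you are right to flag the only two delicate points: the degenerate case where all $b_i=0$, and the sign of $p$ if negative integers are allowed. In the application inside \cref{VertPoints} the numerators are coordinates of a lattice point in the Newton polytope and the denominators are entries of exponent vectors, so everything in sight is nonnegative and the lemma is needed only in its positive form; your caveats cost nothing and the proof is sound.
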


	
Recall the statement: \VertPoints*

\begin{proof}[Proof of Lemma \ref{VertPoints} ]
	We will prove that the existence of a lattice point in a polytope with vertices $\mathbf{a_1},\dots,\mathbf{a_k}$ having disjoint supports implies that $\gcd(\mathbf{a}_i, \mathbf{a}_j) > 1$ for some $i$ and $j$. Now suppose an interior lattice point $\mathbf{b}$ exists in a polytope with disjoint supports.  Then it is of the form
	\[ \mathbf{b}=\sum^k_{j=1}p_j \mathbf{a}_j.
	\]
	As the point $\mathbf{b}$ lies in the convex hull of our polytope, we have the added restriction that 
	\[\sum^k_{j=1}p_j=1.\]
	As $\mathbf{b}$ is an integer lattice point, each coordinate must be an integer. By assumption, the collection of all $\mathbf{a}_j$ have disjoint supports, so we can break up $\mathbf{b}$ into a sum of $\mathbf{b_j}$ where each $\mathbf{b}_j$ has the same support as $\mathbf{a}_j$ and thus for $i\in supp(\mathbf{a_j})$ the entry $b_{j,i}=p_j a_{j,i}$. Therefore, $p_j=\frac{b_{j,i}}{a_{j,i}}$. This also gives that for all $s$ supports in a given $\mathbf{a_j}$,  \[p_j=\frac{b_{j,1}}{a_{j,1}}=\dots=\frac{b_{j,s}}{a_{j,s}}\]
	which, by our above lemma gives $p_j=\frac{\gcd(\mathbf{b}_j)}{\gcd(\mathbf{a}_j)}$. We may now rewrite the second summation above as follows:

	\[
	\sum^{k-1}_{j=1}\frac{\gcd(\mathbf{b}_j)}{\gcd(\mathbf{a}_j)}=\frac{\gcd(\mathbf{a}_k)-\gcd(\mathbf{b}_k)}{\gcd(\mathbf{a}_k)}
	\]
	
	Now by giving the left above a common denominator of $\Pi^{k-1}_{j=1}\gcd(\mathbf{a}_j)$ the above becomes:
	\[ 
	\frac{\sum^{k-1}_{i=1}\gcd(\mathbf{b}_i)\Pi_{j\neq i}\gcd(\mathbf{a}_j) }{\Pi^{k-1}_{j=1}\gcd(\mathbf{a}_j)}=\frac{\gcd(\mathbf{a}_k)-\gcd(\mathbf{b}_k)}{\gcd(\mathbf{a}_k)}
	\]
	Now once again, this is of the form where we may use the above lemma. For the sake of notation we will relabel the above numerators $B_1$ and $B_2$ so we see that the above fraction reduces to:
	\[\frac{\gcd(B_1,B_2)}{\gcd(\Pi^{k-1}_{j=1}\gcd(\mathbf{a}_j),\gcd(\mathbf{a}_k))}.\]
	Now if we examine the denominator, this must be greater than one, as the fraction is less than one and greater than zero. This would imply that that $\gcd(\mathbf{a}_k,\mathbf{a}_i)>1$ for some $i$,  thus proving the statement. 
\end{proof}

\section{Proof of Theorem \ref{maintheorem} and \ref{interiorlatticepoints}}
\label{section3}
We can now proceed with the proofs of \cref{maintheorem} and \cref{interiorlatticepoints}. These are largely corollaries of the lemmas given in the previous section. We first prove \cref{interiorlatticepoints}, which we will use in the proof of \cref{maintheorem}. Recall the statement:
\interiorlatticepoints*
\begin{proof}
	If $f$ is well-poised, then by Lemma \ref{WellPDis} the vertices of $N(f)$ have disjoint supports satisfying the first condition. Additionally by Lemma \ref{PolyDis}, $N(f)$ is a simplex. Since $N(f)$ is a simplex, for any two vertices $\bf{x^{a_i}, x^{a_j}}$ there is an edge connecting them corresponding to the irreducible initial form $c_i \mathbf{x^{a_i}} + c_j \mathbf{x^{a_j}}$. Then by Lemma \ref{RedBinom}, any pair of vertices have $\gcd(\bf{a_i}, \bf{a_j})=1$. We may now apply \cref{VertPoints} to conclude there are no interior lattice points, and consequently all monomial terms in $f$ must correspond to vertices.
\end{proof}

\maintheorem*
\begin{proof}[Proof]
$\Rightarrow$ This is similar to the proof of \cref{interiorlatticepoints}, which by application of the lemmas gives all monomial terms in $f$ must correspond to vertices of $N(f)$, where all vertices are disjointly supported and pairwise of the form that $\gcd(\bf{a_i}, \bf{a_j})=1$.
\\
$\Leftarrow$ By \cref{PolyDis}, the Newton polytope $N(f)$ is a simplex. By \cref{VertPoints} each edge of $N(F)$ is an empty simplex and thus by \cref{RedBinom} correspond to an irreducible binomial. Consider an arbitrary initial form $\ini{\boldsymbol \omega}(f)$ of $f$, and suppose it can be written, $\ini{\boldsymbol \omega}(f) = g_{1}g_{2}$. We show without loss of generality that $g_1$ is a constant. 
We have noted above that any binomial initial form of $f$ is irreducible, so if $\ini{\boldsymbol \omega}(f)$ is a binomial we are finished. The argument for the general case is similar. If $\ini{\boldsymbol \omega}(f) = g_{1}g_{2}$, then we get a Minkowski sum decomposition of $N(\ini{\boldsymbol \omega}(f))=N(g_{1})+N(g_{2})$. We know that $N(\ini{\boldsymbol \omega}(f))$ is a face of $N(f)$ and therefore a simplex. Then, any Minkowski decomposition of a simplex is a sum of a point and a simplex. Then, by the disjoint supports condition, this point must be the origin, and so one of our terms must be constant. 

\end{proof}

\section{The Tropical Variety}\label{Angela}

Let  $f = \sum_{i = 1}^K c_i \x^{\ba_i}$ be a disjointly supported polynomial with no constant term. In this section we explicitly construct the faces of the Gr\"obner fan of the principal ideal $\langle f \rangle$ whose support is $\Trop(f)$.

  Let ${\bf 1}$ be the vector of $1$'s, then $ \ell_i := \langle {\bf 1}, \ba_i \rangle = \sum_{j =1}^n a_i^j > 0$ for all $c_i \neq 0$. 
  By letting $\ell$ be $LCM\{ \ell_i \mid c_i \neq 0\}$ and $\mathbf{v}_f$ be the vector with entry $\frac{\ell}{\ell_i}$ at any index in $supp(\ba_i)$ and 0 otherwise, we see that $\langle \mathbf{v}_f, \ba_i\rangle = \langle\mathbf{v}_f, \ba_j\rangle > 0$ $\forall i, j$. 
  It follows that $f$ is homogeneous with respect to $\mathbf{v}_f$, and that the support of the Gr\"obner fan of $f$ is all of $\R^n$ (see \cite[Proposition 1.12]{Sturmfels96}). 
  We let $L_f$ denote the homogeneity space of $f$, in particular $L_f = \{\bu \mid \langle \bu, \ba_i\rangle = \langle \bu, \ba_j \rangle, \ \ 1 \leq i < j \leq K\}$.  Moreover, for each $\ba_i$ with $c_i \neq 0$ we let $\mathbf{w}_i$ be the vector with entry $0$ for $j \notin supp(\ba_i)$ and $-1$ for $j \in supp(\ba_i)$.

\begin{proposition}
Let $S \subseteq [K]$, $f_S = \sum_{j \in S} c_j \x^{\ba_j}$, and let $C_S := \{\boldsymbol\omega \mid \ini{\boldsymbol\omega}(f) = f_S\}$, then:
\[
C_S = L_f + \sum_{i \notin S} \R_{> 0}\mathbf{w}_i.
\]
\end{proposition}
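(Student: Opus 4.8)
We want to show that the Gröbner cone $C_S = \{\boldsymbol\omega : \ini{\boldsymbol\omega}(f) = f_S\}$ equals $L_f + \sum_{i\notin S}\R_{>0}\mathbf{w}_i$. The statement is about when a weight vector $\boldsymbol\omega$ selects exactly the terms indexed by $S$ as those of maximal weight. Writing $\boldsymbol\omega\cdot\ba_i$ for the weight of the $i$-th term, the condition $\ini{\boldsymbol\omega}(f)=f_S$ says precisely that $\boldsymbol\omega\cdot\ba_j = \boldsymbol\omega\cdot\ba_k$ for all $j,k\in S$ (call this common value $m$), and $\boldsymbol\omega\cdot\ba_i < m$ for all $i\notin S$. (We should note at the outset that for this to be nonempty we need $S\neq\varnothing$; the intended convention is presumably $S\neq\varnothing$, and one might remark that $C_\varnothing=\varnothing$.)

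**Step 1: the inclusion $\supseteq$.** Take $\boldsymbol\omega = \bu + \sum_{i\notin S}\lambda_i\mathbf{w}_i$ with $\bu\in L_f$ and all $\lambda_i>0$. For $j,k\in S$: since $\mathbf{w}_i$ is supported on $supp(\ba_i)$, which is disjoint from $supp(\ba_j)$ and $supp(\ba_k)$ whenever $i\notin S$ (disjoint-support hypothesis — note $j,k\in S$ so $i\neq j,k$), we get $\mathbf{w}_i\cdot\ba_j = \mathbf{w}_i\cdot\ba_k = 0$, and $\bu\cdot\ba_j=\bu\cdot\ba_k$ since $\bu\in L_f$; hence $\boldsymbol\omega\cdot\ba_j = \boldsymbol\omega\cdot\ba_k =: m$. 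For $i_0\notin S$: $\mathbf{w}_{i_0}\cdot\ba_{i_0} = -\sum_{j\in supp(\ba_{i_0})}a_{i_0}^j = -\ell_{i_0} < 0$ (here I use that $f$ has no constant term, so $\ell_{i_0}>0$, and that entries of $\ba_{i_0}$ are nonnegative), while for $i\notin S$ with $i\neq i_0$, $\mathbf{w}_i\cdot\ba_{i_0}=0$ by disjoint supports, and $\bu\cdot\ba_{i_0} = m$ since $\bu\in L_f$. Therefore $\boldsymbol\omega\cdot\ba_{i_0} = m + \lambda_{i_0}(-\ell_{i_0}) < m$. Thus $\ini{\boldsymbol\omega}(f)=f_S$.

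**Step 2: the inclusion $\subseteq$.** Suppose $\boldsymbol\omega\in C_S$, so $\boldsymbol\omega\cdot\ba_j = m$ for all $j\in S$ and $\boldsymbol\omega\cdot\ba_i < m$ for all $i\notin S$. I want to produce $\bu\in L_f$ and $\lambda_i>0$ ($i\notin S$) with $\boldsymbol\omega = \bu + \sum_{i\notin S}\lambda_i\mathbf{w}_i$. The idea: for each $i\notin S$ set $\lambda_i := (m - \boldsymbol\omega\cdot\ba_i)/\ell_i$, which is $>0$ by hypothesis and since $\ell_i>0$. Define $\bu := \boldsymbol\omega - \sum_{i\notin S}\lambda_i\mathbf{w}_i$. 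It remains to check $\bu\in L_f$, i.e. $\bu\cdot\ba_k$ is independent of $k\in[K]$. For $k\in S$: each $\mathbf{w}_i\cdot\ba_k=0$ ($i\notin S$, disjoint supports), so $\bu\cdot\ba_k = \boldsymbol\omega\cdot\ba_k = m$. For $k\notin S$: $\sum_{i\notin S}\lambda_i\mathbf{w}_i\cdot\ba_k = \lambda_k\mathbf{w}_k\cdot\ba_k = \lambda_k(-\ell_k) = -(m-\boldsymbol\omega\cdot\ba_k)$, so $\bu\cdot\ba_k = \boldsymbol\omega\cdot\ba_k + (m - \boldsymbol\omega\cdot\ba_k) = m$. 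Hence $\bu\cdot\ba_k = m$ for every $k$, so $\bu\in L_f$, completing the inclusion.

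**Main obstacle.** The argument is essentially bookkeeping with disjoint supports, and the only real subtlety is making sure the $\mathbf{w}_i$ behave well on off-support vectors — that is, that $\mathbf{w}_i\cdot\ba_k = 0$ whenever $k\neq i$ (for $k\in S$ or $k\notin S$, $k\neq i$), which is exactly disjoint supports, and $\mathbf{w}_i\cdot\ba_i = -\ell_i$ with $\ell_i>0$, which uses the "no constant term" hypothesis and nonnegativity of exponents. I expect the trickiest presentational point is to be careful that the decomposition in Step 2 is genuinely of the stated form (the $\lambda_i$ strictly positive, $\bu$ genuinely in $L_f$) rather than merely an affine-span statement; writing out the explicit $\lambda_i$ as above handles this cleanly. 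One should also explicitly record the standing assumption that makes $\ini{\boldsymbol\omega}(f)=f_S$ equivalent to the max-weight description (that $f_S$ is a subsum, so $S\neq\varnothing$ and the $c_j$, $j\in S$, are among the nonzero coefficients), and perhaps remark that $C_S$ is empty for $S=\varnothing$, consistent with the right-hand side being empty under the usual convention that a cone must be nonempty — or, if the paper's convention allows, one could instead note the formula is intended for $S\neq\varnothing$.
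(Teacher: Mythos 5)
Your proof is correct and follows essentially the same approach as the paper: the forward inclusion is the direct computation that the $\mathbf{w}_i$ vanish off-support, and the reverse inclusion uses exactly the same decomposition with $\lambda_i = (m - \boldsymbol\omega\cdot\ba_i)/\ell_i$, which matches the paper's $\frac{k-k_i}{\ell_i}$. Your write-up is somewhat more explicit (you avoid the paper's ``without loss of generality'' reduction and spell out the off-support computations, and you flag the $S=\varnothing$ and no-constant-term conventions), but the underlying argument is the same.
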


\begin{proof}
Let $\boldsymbol\omega \in L_f+ \sum_{i \notin S} \R_{> 0}\mathbf{w}_i$ and consider $\ini{\boldsymbol\omega}(f)$. 
Without loss of generality we may assume that $\boldsymbol\omega = \sum_{i \notin S} \; n_i\mathbf{w}_i$ with $n_i > 0$.
The polynomial $f$ has disjoint supports, so $\langle \boldsymbol\omega, \ba_i \rangle$ is $0$ if $i \in S$ and $< 0$ if $i \notin S$. 
It follows that $\ini{\boldsymbol\omega}(f) = f_S$. 
This proves that $L_f + \sum_{i \notin S} \R_{> 0}\mathbf{w}_i \subseteq C_S$.

If $\boldsymbol\omega \in C_S$, then $\boldsymbol\omega$ weights each term of $f_S$ equally. We let $k = \langle \boldsymbol\omega, \ba_j \rangle$, where $j$ is any element of $S$ and $k_i = \langle \boldsymbol\omega, \ba_i\rangle$ for $i \notin S$.  Observe that $k_i < k$, and that 
$\boldsymbol\omega - \sum_{i =1}^K(k-k_i)\frac{1}{\ell_i}\mathbf{w}_i$ weights all monomials of $f$ equally. It follows that $\boldsymbol\omega - \sum (k - k_i)\frac{1}{\ell_i}\mathbf{w}_i \in L_f$. As $k - k_i > 0$, we conclude that $\boldsymbol\omega \in L_f + \sum_{i \notin S} \R_{> 0}\mathbf{w}_i$, and that $C_S \subseteq L_f + \sum_{i \notin S} \R_{> 0}\mathbf{w}_i$.
\end{proof}

Each $f_S$ is a polynomial which corresponds to a face of the Newton polytope of $f$ and likewise, to a cone of the Gr\"obner fan.  Observe that by definition the tropical variety $\Trop(f)$ is the union of the cones $C_S$ where $|S| \geq 2$.

To complete the description of each cone $C_S$ we compute a basis for $L_f$.  First, we observe that $\mathbf{v}_f \in L_f$, and for any $\boldsymbol\lambda \in L_f$ there is some $q$ such that $\langle \boldsymbol\lambda - q\mathbf{v}_f, \ba_i \rangle = 0$ for all $i \in [K]$. The space $N_f = \{\boldsymbol\lambda' \mid \langle \boldsymbol\lambda', \ba_i \rangle = 0 \}$ is certainly contained in $L_f$, so it follows that $\mathbf{v}_f$ and a basis of $N_f$ suffice to give a basis of $L_f$. For a basis of $N_f$ we take the integral vectors $\mathbf{v}_{i,j} = \ba_{i}^{1}e_{i}^{j} - \ba_{i}^{j}e_{i}^{j}$; for $2 \leq j \leq k_i$, where $e_{i}^{j}$ is the $j$-th elementary basis vector from the support of $\ba_i$. 

Theorem 1 of \cite{Kaveh-Manon-NOK} gives a recipe for producing a full rank valuation with associated Newton-Okounkov given a prime cone from a tropical variety.  It is required to choose a a linearly independent set of vectors from the cone which span a full dimensional subcone.  For the cone $C_S$ with $|S| = 2$ we select the set $W_S$ composed of the basis $\{\mathbf{v}_f, \ldots, \mathbf{v}_{i,j}, \ldots\} \subset L_f$, and the extremal vectors $\mathbf{w}_i$ for $i \in S^c$.  If the sets $S$ and $S'$ differ by a single index, then $W_S$ and $W_{S'}$ differ by a single vector.  We let $M_S$ be the matrix with rows equal to the elements of $W_S$.

\[\begin{bmatrix}
\mathbf{v}_f\\
\vdots\\
\mathbf{v}_{2,i}\\
\mathbf{v}_{3, i}\\
\vdots\\
\mathbf{v}_{k_i,i}\\
\vdots\\
\mathbf{w}_{i}\\
\vdots
\end{bmatrix} = \begin{bmatrix}
\cdots & \frac{\ell}{\ell_i} & \frac{\ell}{\ell_i} & \frac{\ell}{\ell_i} & \cdots & \frac{\ell}{\ell_i} & \cdots \\
\cdots & \vdots & \vdots & \vdots & \vdots & \vdots & \cdots \\
\cdots & \ba_{2}^i & -\ba_{1}^i& 0 & \cdots & 0 & \cdots\\
\cdots & \ba_{3}^i & 0 & -\ba_{1}^i & \cdots & 0 & \cdots \\
\cdots & \vdots & \vdots & \vdots & \ddots & \vdots & \cdots \\
\cdots & \ba_{k_i}^i & 0 & 0 & \cdots & -\ba_{1}^i & \cdots \\
\cdots & \vdots & \vdots & \vdots & \vdots & \vdots & \cdots \\
\cdots & -1 & -1 & -1 & \cdots & -1 & \cdots \\
\cdots & \vdots & \vdots & \vdots & \vdots & \vdots & \cdots \\
\end{bmatrix}\]

By \cite[Proposition 4.2]{Kaveh-Manon-NOK}, the matrix $M_S$ defines a full rank valuation $\mathfrak{v}_S : \K[\x]/\langle f \rangle \to \R^{n-1}$. The image $S(\K[\x]/\langle f \rangle, \mathfrak{v}_S) \subseteq \R^{n-1}$ of $\mathfrak{v}_S$ is the semigroup generated by the columns of $M_S$ under addition, where $\mathfrak{v}_S(x_{ij})$ is the $ij$-th column of $M_S$.  If $i \in S^c$, $\mathfrak{v}_S$ sends $x_{ij}$ to the $j$-th column of the block displayed above.  If $i \in S$, $\mathfrak{v}_S$ sends $x_{ij}$ to the $j$-th column of the block displayed above, except the $-1$ entries are $0$.  

The convex hull $P(\K[\x]/\langle f \rangle, \mathfrak{v}_S) \subseteq \R^{n-1}$ of $S(\K[\x]/\langle f \rangle, \mathfrak{v}_S) \subseteq \R^{n-1}$ is called the \emph{Newton-Okounkov} cone of $\mathfrak{v}_S$.  For each choice of $S$ with $|S|=2$, there is a flat family $\pi_S: E_S \to \mathbb{A}^1(\K)$ such that the coordinate ring of the fiber $\pi_S^{-1}(c)$ for $c \neq 0$ is $\K[\x]/\langle f \rangle$ and the coordinate ring of the fiber $\pi_S^{-1}(0)$ is the affine semigroup algebra $\K[S(\K[\x]/\langle f \rangle, \mathfrak{v}_S)]$.  In particular, $\K[S(\K[\x]/\langle f \rangle, \mathfrak{v}_S)] \cong \K[\x]/\langle \ini{\boldsymbol \omega}(f) \rangle$ for any $\boldsymbol \omega \in C_S$.

If there is a vector ${\bf d} \in \Z_{> 0}^n$ such that $\langle {\bf d}, \ba_i \rangle$ is a fixed integer for all monomial exponents $\ba_i$ appearing in $f$, we say that $f$ is homogeneous with respect to ${\bf d}$. For example, if $f$ is homogeneous in the classical sense we may take ${\bf d}$ to be the all $1$'s vector.  Assuming a fixed ${\bf d}$, the algebra $\K[\x]/\langle f \rangle$ is positively graded, that is it can be expressed as direct sum of finite dimensional vector spaces $A_N$:

\[\K[\x]/\langle f \rangle \cong \bigoplus_{N \geq 0} A_N.\]

\noindent
In this setting, the projective variety $X = \textup{Proj}(\K[\x]/\langle f \rangle)$ carries a flat degeneration to the projective toric variety $X_S = \textup{Proj}(\K[S(\K[\x]/\langle f \rangle, \mathfrak{v}_S)])$. It is possible that $X_S$ is non-normal, however the normalization is the projective toric variety associated to the \emph{Newton-Okounkov body} $\Delta(\K[\x]/\langle f \rangle, \mathfrak{v}_S)$.  Following \cite[Corollary 4.7]{Kaveh-Manon-NOK}, the Newton-Okounkov body $\Delta(\K[\x]/\langle f \rangle, \mathfrak{v}_S)$ is obtained from $M_S$ by dividing the $ij$-th column by the degree of $x_{ij}$ assigned by ${\bf d}$, and taking the convex hull of the resulting column vectors. 

\begin{example}
\emph{
We compute the matrices $M_S$ for $f = x + y^2 + zw \in \K[x, y, z, w]$. Let $x, y^2$, and $zw$ be the $i =1, 2$ and $3$ monomials, respectively.  The space $L_f \subset \mathbb{Q}^4$ can be generated by the vectors ${\bf v}_f = (2, 1, 1, 1)$ and ${\bf v}_{2,3} = (0, 0, 1, -1)$. From this we deduce that $A = \K[x, y, z, w]/\langle f \rangle$ is graded by the semigroup in $\Z^2$ generated by $(1, 0), (1, 1),$ and $(1, -1)$.  The third row of $M_S$ is ${\bf w}_i$, where $\{i\} = S^c$:  
}
\[M_{12} = \begin{bmatrix}
2 & 1 & 1 & 1\\
0 & 0 & 1 & -1\\
0 & 0 & -1 & -1
\end{bmatrix}\hfill M_{13} = \begin{bmatrix}
2 & 1 & 1 & 1\\
0 & 0 & 1 & -1\\
0 & -1 & 0 & 0
\end{bmatrix}\hfill M_{23} = \begin{bmatrix}
2 & 1 & 1 & 1\\
0 & 0 & 1 & -1\\
-1 & 0 & 0 & 0
\end{bmatrix}\]
\emph{
\noindent
For each $S$, the columns of $M_S$ generate a semigroup in $\Z^3$ whose semigroup algebra is the coordinate ring of a toric degeneration of $\K[x, y, z, w]/\langle f \rangle$.
}
\end{example}

\bibliography{geo}
\bibliographystyle{plain}

\end{document}